\newtheorem{theorem}{Theorem}[section]
\newtheorem{lemma}[theorem]{Lemma}
\newtheorem{proposition}[theorem]{Proposition}
\newtheorem{corollary}[theorem]{Corollary}
\newcommand{\beq}{\begin{eqnarray*}}
\newcommand{\eeq}{\end{eqnarray*}}
\newcommand{\beqn}{\begin{eqnarray}}
\newcommand{\eeqn}{\end{eqnarray}}
\numberwithin{equation}{section}
\begin{document}

%%%%% To ease editing, for IMPAN journals add:

%\baselineskip=17pt

%%%%%%%%%%%

%% In the running head, replace first names by initials
%% and give an abbreviation of the title.

\title[A link between the log-Sobolev inequality and Lyapunov condition]{A link between the log-Sobolev inequality and Lyapunov condition}

\author[Y. LIU]{Yuan LIU}
\address{Yuan LIU, Institute of Applied Mathematics, Academy of Mathematics and Systems Science,
Chinese Academy of Sciences, Beijing 100190, China}
\email{liuyuan@amss.ac.cn}

\date{\today}

\begin{abstract}
We give an alternative look at the log-Sobolev inequality (LSI in short) for log-concave measures by semigroup tools. The similar idea yields a heat flow proof of LSI under some quadratic Lyapunov condition for symmetric diffusions on Riemannian manifolds provided the Bakry-Emery's curvature is bounded from below. Let's mention that, the general $\phi$-Lyapunov conditions were introduced by Cattiaux-Guillin-Wang-Wu \cite{CGWW} to study functional inequalities, and the above result on LSI was first proved subject to $\phi(\cdot)=d^2(\cdot, x_0)$ by Cattiaux-Guillin-Wu \cite{CGW} through a combination of detective $L^2$ transportation-information inequality $\mathrm{W\hspace*{-0.5mm}_2I}$ and the HWI inequality of Otto-Villani.

Next, we assert a converse implication that the Lyapunov condition can be derived from LSI, which means their equivalence in the above setting.
\end{abstract}

\subjclass[2010]{26D10, 47D07, 60E15, 60J60}

\keywords{log-Sobolev inequality, log-concave measure, heat flow, symmetric diffusion, Lyapunov condition}

\maketitle

\allowdisplaybreaks

\section{Introduction}
 \label{Intro}
 \setcounter{equation}{0}
Our aim of this paper is to give a direct proof of the log-Sobolev inequality (LSI for short) for symmetric diffusions under the Lyapunov condition and curvature condition, and the converse implication will be investigated further.

In the sequel, denote by $E$ a connected complete Riemannian manifold of finite dimension, $d$ the geodesic distance, $\mathrm{d}x$ the volume measure, $\mu(\mathrm{d}x) = e^{-V(x)}\mathrm{d}x$ a probability measure with $V\in C^2(E)$, $\mathrm{L}=\Delta - \nabla V\cdot \nabla$ the $\mu$-symmetric diffusion operator with domain $\mathcal{D}(\mathrm{L})$ in $L^2(\mu)$, $P_t = e^{t\mathrm{L}}$ the  semigroup, $\Gamma(f,g) = \nabla f\cdot \nabla g$ the carr\'{e} du champ operator, and $\mathcal{E}(f,g) = \int \Gamma(f,g) \mathrm{d}\mu$ the Dirichlet form with domain $\mathcal{D}(\mathcal{E})$ in $L^2(\mu)$. It is known that the integration by parts formula reads
   \[ \mathcal{E}(f,g) = -\int f \mathrm{L}g  \;\mathrm{d}\mu, \ \ \forall f\in \mathcal{D}(\mathcal{E}), g\in \mathcal{D}(\mathrm{L}), \]
and $P_t$ is $L^2$-ergodic, i.e.
   \[ ||P_tf-\mu f||_{L^2(\mu)} \to 0 \ \textrm{ as } t\to \infty, \ \forall f\in L^2(\mu). \]
We refer to Bakry-Gentil-Ledoux \cite{BGL} for a detailed presentation of the fundamentals. For simplicity, write $\mu f^2 =\int f^2 \mathrm{d}\mu$, $P_t f^2 = P_t(f^2)$, and $\mathcal{E}[f]=\mathcal{E}(f,f)$.

A (tight) LSI means there exists a constant $C>0$ such that for any $f\in \mathcal{D}(\mathcal{E})$
   \beqn
     \mathrm{Ent}(f^2) := \mu(f^2\log f^2) - \mu f^2\log\mu f^2 \leqslant C\mathcal{E}(f,f). \label{eqLSI}
   \eeqn
There have been several classical proofs of LSI for log-concave measures, such as \cite{BE, Bobkov-Ledoux, CHL, Ledoux-01}. We would like to revisit this case in another simple viewpoint.

\begin{proposition} \label{thmLSILogConcave}
Suppose $\mu$ is log-concave in $\mathbb{R}^n$ such that $\mathrm{Hess}(V)\geqslant c\mathrm{Id}$ with $c>0$. Then the LSI (\ref{eqLSI}) holds for $C=2/c$.
\end{proposition}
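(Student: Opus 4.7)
The plan is to use the semigroup/heat flow approach, which matches the paper's theme of giving a ``semigroup tools'' proof. Fix a smooth $f$ with, without loss of generality, $f$ bounded below by a positive constant (standard approximation $f^2\mapsto f^2+\varepsilon$ handles the general case at the end). The first step is to establish the de Bruijn-type entropy dissipation identity
\[
\mathrm{Ent}(f^2)=\int_0^{\infty}\!\!\int\frac{\Gamma(P_t f^2)}{P_tf^2}\,\mathrm{d}\mu\,\mathrm{d}t.
\]
I would derive this by differentiating $s\mapsto P_s(h\log h)$ with $h=P_{t-s}f^2$, using the diffusion chain rule $\mathrm{L}(h\log h)=(1+\log h)\mathrm{L}h+\Gamma(h)/h$ to get $\partial_s P_s(h\log h)=P_s(\Gamma(h)/h)$, then integrating against the invariant measure $\mu$ and letting $t\to\infty$, invoking the $L^2$-ergodicity recalled in the introduction to conclude $\int P_t f^2\log P_tf^2\,\mathrm{d}\mu\to\mu f^2\log\mu f^2$.

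The second step is the Bakry--Emery gradient commutation. In $\mathbb{R}^n$, Bochner's identity gives $\Gamma_2(g)=\|\mathrm{Hess}(g)\|_{HS}^2+\mathrm{Hess}(V)(\nabla g,\nabla g)$, so the hypothesis $\mathrm{Hess}(V)\geqslant c\mathrm{Id}$ yields the curvature-dimension inequality $\Gamma_2(g)\geqslant c\,\Gamma(g)$. By the standard interpolation $\frac{\mathrm{d}}{\mathrm{d}s}P_s\sqrt{\Gamma(P_{t-s}g)}\geqslant c\,P_s\sqrt{\Gamma(P_{t-s}g)}$ (obtained by differentiating and invoking $\Gamma_2\geqslant c\Gamma$), one gets the pointwise commutation $\sqrt{\Gamma(P_tg)}\leqslant e^{-ct}P_t\sqrt{\Gamma(g)}$.

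The third step combines the two. Writing $\sqrt{\Gamma(g)}=\sqrt{g}\cdot\sqrt{\Gamma(g)/g}$ and applying Cauchy--Schwarz to $P_t$ upgrades the commutation to the ratio form
\[
\frac{\Gamma(P_tg)}{P_tg}\leqslant e^{-2ct}\,P_t\!\left(\frac{\Gamma(g)}{g}\right).
\]
Applied to $g=f^2$, the diffusion identity $\Gamma(f^2)/f^2=4\Gamma(f)$ together with the $\mu$-invariance of $P_t$ gives $\int \Gamma(P_tf^2)/P_tf^2\,\mathrm{d}\mu\leqslant 4e^{-2ct}\mathcal{E}(f)$. Plugging this into the dissipation identity and integrating in $t$ produces the bound $\mathrm{Ent}(f^2)\leqslant (2/c)\,\mathcal{E}(f)$, i.e.\ $C=2/c$.

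The main technical obstacle I anticipate is the regularity/justification layer rather than any conceptual step: ensuring that $P_tf^2$ is strictly positive so that $\Gamma(P_tf^2)/P_tf^2$ is well defined, that the interchange of $\partial_s$ with $P_s$ and of integrals with limits are legitimate, and that the Cauchy--Schwarz upgrade is valid in the diffusion calculus (rather than only in the smooth $\mathbb{R}^n$ setting). All of this is handled by a standard approximation argument replacing $f^2$ by $f^2+\varepsilon$, truncating $f$ to be bounded with bounded gradient, then passing $\varepsilon\to 0$ and removing the truncation via monotone/dominated convergence.
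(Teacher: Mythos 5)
Your proof is correct and gives the sharp constant $C=2/c$, but it packages the heat-flow argument differently from the paper. The paper considers the single quantity $\Phi(t)=2\mathcal{E}[\sqrt{P_t f}]-c\,\mathrm{Ent}(P_t f)$ (for $f>0$), proves the monotonicity $\Phi'\leqslant 0$ by differentiating the Fisher information $\mathcal{E}[\sqrt{P_t f}]$ along the flow via the Bochner-type identity $\nabla\mathrm{L}\varphi=\mathrm{L}\nabla\varphi-\mathrm{Hess}(V)\nabla\varphi$, and then concludes from $\Phi(\infty)=0$ (ergodicity) that $\Phi(0)\geqslant 0$. You instead isolate two separate ingredients --- the de Bruijn entropy-dissipation identity and the pointwise gradient commutation $\sqrt{\Gamma(P_t g)}\leqslant e^{-ct}P_t\sqrt{\Gamma(g)}$ (itself obtained by an interpolation along the semigroup) --- then upgrade the latter by Cauchy--Schwarz to the entropic form $\Gamma(P_tg)/P_tg\leqslant e^{-2ct}P_t\bigl(\Gamma(g)/g\bigr)$ and integrate in time. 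Both arguments spend the curvature hypothesis $\mathrm{Hess}(V)\geqslant c\,\mathrm{Id}$, i.e.\ $\Gamma_2\geqslant c\,\Gamma$, exactly once, but in different forms: the paper uses it to obtain a Gr\"onwall-type decay of Fisher information along the flow, while you use it to derive the pointwise commutation, a stronger local statement that requires its own interpolation step but makes the final time-integration completely transparent. Your route is the standard one in Bakry--Gentil--Ledoux \cite{BGL}; the paper's is the monotone-functional packaging of Bakry--Emery \cite{BE}. They are equivalent in strength and constant; the paper's version is perhaps more self-contained since it does not presuppose the gradient commutation as a separate lemma.
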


Set $\Phi(t) = 2\mathcal{E}[\sqrt{P_t f}] - c\mathrm{Ent}(P_tf)$ for $f>0$. Heuristically, due to $P_t f\to\mu f$ in $L^2$-norm, we have $\Phi(\infty) =  0$ and then $\Phi(0) \geqslant 0$ if there holds the monotonicity
   \beqn
     \frac{\mathrm{d}\Phi}{\mathrm{d}t} \leqslant 0. \label{eqMono}
   \eeqn
Actually, using the integration by parts formula and next two equalities for $\varphi= P_tf$
  \[ \frac{\mathrm{\partial}}{\mathrm{\partial}t} \varphi = \mathrm{L}\varphi,
   \ \ \ \nabla\mathrm{L}\varphi = \mathrm{L}\nabla \varphi - \mathrm{Hess}(V)\nabla\varphi, \]
we can calculate $\frac{\mathrm{d}\Phi}{\mathrm{d}t}$ to imply (\ref{eqMono}) automatically.

For general cases, some practical conditions have been presented to derive LSI. In particular, Cattiaux-Guillin-Wang-Wu \cite{CGWW} introduced the $\phi$-Lyapunov conditions to study super Poincar\'{e} inequalities and LSI. Cattiaux-Guillin-Wu \cite{CGW} mainly discussed the ``quadratic" Lyapunov condition as $\phi(\cdot)=d^2(\cdot, x_0)$ to derive the Talagrand's inequality $\mathrm{W\hspace*{-0.5mm}_2H}$, and also LSI provided that the Bakry-Emery's curvature has a lower bound (actually, they gave more than that when the curvature had a decay rate and the Lyapunov condition became stronger correspondingly). So our first aim comes out of here. Recall that the proof of LSI in \cite{CGW} relied on a combination of detective $L^2$ transportation-information inequality $\mathrm{W\hspace*{-0.5mm}_2I}$ and the HWI inequality in Otto-Villani \cite{Otto-Villani}. We want to give a direct proof of this fact via the similar idea of monotonicity on heat flow as for Proposition \ref{thmLSILogConcave}.

With a little relaxation, say $W>0$ is a Lyapunov function if $W^{-1}$ is locally bounded and there exist two constants $c>0, b\geqslant 0$ and some $x_0\in E$ such that in the sense of distribution
  \beqn
     \mathrm{L}W \leqslant (-cd^2(x,x_0) + b) W. \label{eqLya}
  \eeqn
Note that we request $W>0$ other than $W\geqslant 1$ in \cite{CGW}, but if $W^{-1}$ is locally bounded, the technique of Bakry-Barthe-Cattiaux-Guillin \cite[Theorem 1.4]{BBCG} still works, which plays an important role in studying various functional inequalities via the Lyapunov type conditions. We prove that

\begin{theorem} \label{thmLya}
Suppose $\mathrm{Ric}+\mathrm{Hess}(V)\geqslant -K \mathrm{Id}$ with $K\in \mathbb{R}^+$. Then the LSI (\ref{eqLSI}) holds under the Lyapunov condition (\ref{eqLya}).
\end{theorem}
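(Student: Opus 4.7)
The plan is to adapt the heat-flow monotonicity proof of Proposition~\ref{thmLSILogConcave} by introducing a Lyapunov-driven correction term to compensate for the merely negative curvature lower bound. For $f>0$ in $\mathcal{D}(\mathcal{E})$ set $u_t=P_tf$ and consider a functional of the form
\begin{equation*}
\Phi(t) := 2\,\mathcal{E}\bigl[\sqrt{u_t}\bigr]-c_0\,\mathrm{Ent}(u_t)+\Psi(t),
\end{equation*}
where $c_0>0$ is the inverse LSI constant (to be identified as an explicit function of $c,b,K$) and $\Psi(t)$ is a correction built from the Lyapunov function $W$. If I can arrange $\Phi(\infty)=0$, exploiting the $L^2$-ergodicity of $P_t$ together with an appropriate decay of $\Psi$, and $\Phi'(t)\le 0$ for all $t\ge 0$, then $\Phi(0)\ge 0$ will yield the tight LSI \eqref{eqLSI} after applying the estimate with $f$ replaced by $f^2$ and using $\mathcal{E}[|f|]\le\mathcal{E}[f]$.

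Differentiating along the semigroup and invoking the Bochner identity $-\tfrac12\frac{d}{dt}I(u_t)=\int\Gamma_2(\log u_t)\,u_t\,d\mu$ together with $\frac{d}{dt}\mathrm{Ent}(u_t)=-I(u_t)$ gives, exactly as in Proposition~\ref{thmLSILogConcave},
\begin{equation*}
\Phi'(t)=-\int\!\bigl(\Gamma_2-c_0\,\Gamma\bigr)\!(\log u_t)\,u_t\,d\mu+\Psi'(t).
\end{equation*}
Under $\mathrm{Ric}+\mathrm{Hess}(V)\ge -K\,\mathrm{Id}$ the first term is only bounded above by $(K+c_0)\,I(u_t)$, leaving a positive curvature deficit. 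To absorb it I would appeal to the Lyapunov condition \eqref{eqLya} via the BBCG integration-by-parts trick applied to the test function $h^2/W$ with $h=\sqrt{u_t}$, which yields
\begin{equation*}
c\int d^2(x,x_0)\,u_t\,d\mu\le\mathcal{E}\bigl[\sqrt{u_t}\bigr]+b\,\mu f,
\end{equation*}
so that a distance-weighted mass of $u_t$ is controlled by a portion of the instantaneous Fisher information $I(u_t)=4\mathcal{E}[\sqrt{u_t}]$, up to a harmless $\mu f$-offset. The natural candidate is $\Psi(t)=\lambda\int(-LW/W)\,u_t\,d\mu$, modulated by the $e^{-2Kt}$-weight that renders $t\mapsto e^{-2Kt}I(u_t)$ non-increasing under $CD(-K,\infty)$; tuning $\lambda$ should give $\Psi'(t)+(K+c_0)\,I(u_t)\le 0$, forcing $\Phi'(t)\le 0$.

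The main obstacle is the bookkeeping of constants: the $(K+c_0)$ produced by the $\Gamma_2$-calculation and the factors $1/c$ and $b$ coming from the Lyapunov moment bound must combine consistently to admit a positive $c_0$ with $\Psi(\infty)=0$. The $b\,\mu f$ residue is harmless by the scaling invariance of \eqref{eqLSI}, and the use of $u_t/W$ as a test function when $W^{-1}$ is only locally bounded is justified by the approximation argument of \cite{BBCG}. Should the single-functional route resist a clean monotonicity, the fallback I would pursue is a time-split argument: the local log-Sobolev inequality under $CD(-K,\infty)$ gives $\mathrm{Ent}(f^2)-\mathrm{Ent}(P_Tf^2)\le\frac{2(e^{2KT}-1)}{K}\mathcal{E}[f]$ for every $T>0$, and then the Poincar\'e inequality implied by \eqref{eqLya} drives $\mathrm{Ent}(P_tf^2)$ to zero exponentially for $t\ge T$; optimizing in $T$ recovers a tight LSI with constant of the correct qualitative form.
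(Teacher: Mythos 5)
Your main route has a genuine gap, and the fallback is also flawed. The central problem is that you discard the useful part of $\Gamma_2$ too early: after writing $\Phi'(t)=-\int(\Gamma_2-c_0\Gamma)(\log u_t)\,u_t\,d\mu+\Psi'(t)$ you immediately bound $\Gamma_2\ge -K\Gamma$, which throws away the Hessian information and leaves a positive deficit $(K+c_0)\,I(u_t)$ with nothing to absorb it. The paper keeps the Hessian: working with $\varphi=P_tf$ and $\mathcal{E}[\varphi]$ (rather than $\sqrt{P_tf}$ and $I(P_tf)$), one gets $\frac{d}{dt}\mathcal{E}[\varphi]=-2\mu|\nabla\nabla\varphi|^2-2\mu[(\mathrm{Ric}+\mathrm{Hess}V)(\nabla\varphi,\nabla\varphi)]$, and the $-2\mu|\nabla\nabla\varphi|^2$ term is exactly what soaks up the Lyapunov moment estimate applied to $h=|\nabla\varphi|$ (via $|\nabla|\nabla\varphi||\le|\nabla\nabla\varphi|$). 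Your Lyapunov bound is instead applied with $h=\sqrt{u_t}$, producing $c\int d^2\,u_t\,d\mu\le\mathcal{E}[\sqrt{u_t}]+b\,\mu f$, which controls a distance-weighted mass by Fisher information — the wrong direction, since the quantity to be dominated is the Fisher information itself. Correspondingly, the correction $\Psi(t)=\lambda e^{-2Kt}\int(-\mathrm{L}W/W)\,u_t\,d\mu$ has derivative involving $\int \mathrm{L}(-\mathrm{L}W/W)\,u_t\,d\mu$, which the Lyapunov hypothesis gives no control of, and there is no reason it should produce the needed $-\mathrm{const}\cdot I(u_t)$.

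A second missing ingredient is the Harnack estimate (Corollary~\ref{corPtbound}): the paper needs it to convert the logarithm in $\frac{d}{dt}\mathrm{Ent}^*(\varphi^2)$, namely the term $\mu\bigl(|\nabla\varphi|^2\log\frac{\varphi^2}{\mu f^2}\bigr)$, into a $d^2$-weighted quantity on which (\ref{eqTransLya}) applies; the resulting factor $\frac{1}{1-e^{-2Kt}}$ blows up as $t\to 0$, which is why the paper's monotonicity only holds for $t\ge t_0>0$ and must be spliced with a separate short-time estimate (Steps 2--3) and Rothaus's lemma. Nothing in your proposal explains how to turn $\log(\varphi^2/\mu f^2)$ into a quantity the Lyapunov inequality can see, nor why a single monotone functional on all of $[0,\infty)$ could exist. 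Finally, the fallback is incorrect as stated: the Poincar\'e inequality decays $\mathrm{Var}(P_tf^2)$ exponentially, and while $\mathrm{Ent}(g)\le\mathrm{Var}(g)$ for probability densities $g$, one then needs to bound $\mathrm{Var}(f^2)$ by $\mathcal{E}[f]$, which requires an $\|f\|_\infty$ factor and therefore cannot give a uniform LSI constant; controlling $\mathrm{Ent}(P_{t_0}f^2)$ by $\mathcal{E}[f]+\mu f^2+\mathrm{Var}(f)$ genuinely needs the Lyapunov machinery of the paper's Steps 1--2, not just the spectral gap.
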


We further investigate how to derive the Lyapunov condition from LSI.
\begin{theorem} \label{thmLSILya}
If the LSI (\ref{eqLSI}) holds, there exists $W>0$ satisfying (\ref{eqLya}).
\end{theorem}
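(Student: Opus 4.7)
The plan is to construct $W$ as the resolvent of $\mathrm{L}$ twisted by a confining quadratic potential. For constants $c, \beta > 0$ to be chosen, I would take $W$ as the solution of
\[
  (\beta - \mathrm{L} - c\, d^2(\cdot, x_0))\, W = 1,
\]
equivalently
\[
  W(x) := \int_0^{\infty} e^{-\beta t} \, \mathbb{E}_x \exp\!\Bigl( c \int_0^t d^2(X_s, x_0)\, \mathrm{d}s \Bigr)\, \mathrm{d}t
\]
via the Feynman--Kac formula for the $\mathrm{L}$-diffusion $(X_t)$. Rearranging yields at once
\[
  \mathrm{L} W = -c\, d^2(\cdot, x_0)\, W + \beta W - 1 \leq \bigl( -c\, d^2(\cdot, x_0) + \beta \bigr)\, W,
\]
which is (\ref{eqLya}) with $b = \beta$. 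Since the Feynman--Kac integrand is pointwise $\geq 1$, we have $W \geq 1/\beta > 0$ everywhere, so $W^{-1} \leq \beta$ is globally (hence locally) bounded.

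The analytic content is to guarantee that $W$ is well defined, which amounts to showing that the top of the spectrum of the Schr\"odinger-type operator $\mathrm{L} + c\, d^2(\cdot, x_0)$ on $L^2(\mu)$, namely
\[
  \lambda(c) := \sup \Bigl\{ -\mathcal{E}(f,f) + c \int d^2(\cdot, x_0) f^2 \, \mathrm{d}\mu \ : \ f \in \mathcal{D}(\mathcal{E}),\ \mu f^2 = 1 \Bigr\},
\]
is finite for sufficiently small $c > 0$, since then any $\beta > \lambda(c)$ makes the resolvent bounded. First I would invoke the classical Herbst argument to deduce from (\ref{eqLSI}) the Gaussian-type integrability $M(\lambda) := \log \mu\bigl( e^{\lambda\, d^2(\cdot, x_0)} \bigr) < +\infty$ for $\lambda$ in a nontrivial interval $[0, \lambda_\ast)$. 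Next, apply the Fenchel--Young / Gibbs variational inequality $\int g\, \mathrm{d}\nu \leq \mathrm{Ent}(\nu|\mu) + \log \mu(e^g)$ to $g = t c\, d^2(\cdot, x_0)$ with $\nu = f^2 \mu$; combined with the LSI bound $\mathrm{Ent}(f^2 \mu | \mu) \leq C\, \mathcal{E}(f,f)$ (for $\mu f^2 = 1$), optimizing in $t$ produces $\lambda(c) \leq M(Cc)/C$, which is finite as soon as $c < \lambda_\ast/C$.

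The hard part will be precisely this spectral bound: Herbst integrability alone only ensures pointwise integrability of $\exp(c \int_0^t d^2(X_s, x_0)\, \mathrm{d}s)$, not that its mean grows at most exponentially in $t$, so LSI must enter through its sharp duality with Fisher information (equivalently, with $\mathcal{E}$) to control the quadratic form of the perturbed operator. A standard approximation of $d^2(\cdot, x_0)$ by bounded truncations, together with monotone convergence and the positivity-preserving property of the Feynman--Kac resolvent, then justifies the above identity in the distributional sense demanded by (\ref{eqLya}).
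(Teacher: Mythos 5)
Your proposal is correct and follows essentially the same strategy as the paper: construct the Lyapunov function as the solution of the Schr\"odinger-type equation $(-\mathrm{L} - c\,d^2 + \beta)W = 1$, invoke Herbst's argument to get $\mu(e^{\lambda d^2(\cdot,x_0)})<\infty$ for small $\lambda$, and then exploit the Gibbs/Donsker--Varadhan variational formula for entropy together with the LSI to show that the perturbed quadratic form $-\mathcal{E}[f] + c\int d^2 f^2\,\mathrm{d}\mu$ is bounded above on the unit $L^2(\mu)$-sphere; this is precisely the Young-inequality-plus-LSI step in the paper's Step~1, and your reparametrization $b=\beta$, optimization in $t$, and conclusion $\lambda(c)\leq M(Cc)/C$ are equivalent to the paper's explicit coercivity bound $\mu(u\,\mathrm{H}u)\geq\tfrac12(\mathcal{E}[u]+\rho b\,\mu u^2)$.

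The one place the routes diverge is in establishing that the solution is genuinely positive and that (\ref{eqLya}) holds in the required distributional sense. The paper proceeds analytically: Lax--Milgram for existence of a weak $H^1(\mu)$ solution, the weak maximum principle for $u\geq 0$ $\mu$-a.e., Gilbarg--Trudinger elliptic regularity for local H\"older continuity, and a bump-function comparison to rule out zeros. You instead invoke the Feynman--Kac representation, which would hand you $W\geq 1/\beta>0$ immediately. This is an attractive shortcut, but it is not free: the pointwise Feynman--Kac identity for the \emph{unbounded} potential $c\,d^2$ is exactly what requires the regularity or truncation machinery the paper makes explicit, so the ``standard approximation'' clause in your last paragraph is carrying genuine weight, not merely bookkeeping. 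The $L^2$ spectral bound you prove gives $W\in L^2(\mu)$ (hence finite a.e.), but upgrading this to ``finite and $\geq 1/\beta$ at \emph{every} $x$'' needs something like local boundedness or Harnack, which is where the paper's Gilbarg--Trudinger step enters. One further small imprecision worth flagging: Herbst integrability of $d^2$ under $\mu$ does not by itself give pointwise integrability of $\exp(c\int_0^t d^2(X_s,x_0)\,\mathrm{d}s)$ for each fixed $x$, since the path integral involves the laws of $X_s$ under $\mathbb{P}_x$ rather than $\mu$; that finiteness again comes out of the spectral/regularity argument rather than preceding it.
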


As consequence, it follows
\begin{corollary} \label{corLSILya}
Suppose $\mathrm{Ric}+\mathrm{Hess}(V)\geqslant -K \mathrm{Id}$ with constant $K\geqslant 0$. Then the LSI (\ref{eqLSI}) is equivalent to the Lyapunov condition (\ref{eqLya}).
\end{corollary}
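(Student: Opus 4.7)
The plan is to establish the equivalence by invoking the two theorems already proved above, since Corollary \ref{corLSILya} is a biconditional whose two implications are exactly the contents of Theorems \ref{thmLya} and \ref{thmLSILya}. Thus there is essentially nothing to do beyond recording the correct quotation of each direction and checking that the hypotheses of the corollary are sufficient for each.

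For the implication ``Lyapunov $\Rightarrow$ LSI,'' I would apply Theorem \ref{thmLya} verbatim: the bound $\mathrm{Ric}+\mathrm{Hess}(V)\geqslant -K\mathrm{Id}$ with $K\geqslant 0$ is exactly the Bakry--Emery curvature hypothesis required there, and the assumed Lyapunov condition (\ref{eqLya}) is the other ingredient, so the LSI (\ref{eqLSI}) follows with a constant $C$ depending only on $c$, $b$, $K$ and on the local behaviour of the Lyapunov function $W$. For the converse ``LSI $\Rightarrow$ Lyapunov,'' I would apply Theorem \ref{thmLSILya}, which produces a strictly positive $W$ with locally bounded inverse satisfying (\ref{eqLya}); note that this direction does not even need the curvature bound, so it is in fact stronger than what the corollary records.

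The only mild observation to make is that Theorem \ref{thmLSILya} yields constants $c,b$ and a basepoint $x_0$ internal to its construction, and these need not coincide with those appearing in any \emph{a priori} Lyapunov assumption one might have started from; this is harmless for the equivalence statement since the corollary only asserts existence of \emph{some} triple $(c,b,x_0)$ and $W$ verifying (\ref{eqLya}).

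The main obstacle, of course, is not in the assembling of the corollary but in the two theorems it quotes: on one side, the design of a decreasing functional along the heat flow $P_t$ that couples $\mathcal{E}[\sqrt{P_tf}]$ with $\mathrm{Ent}(P_tf)$ in the spirit of Proposition \ref{thmLSILogConcave} while accommodating the non-log-concave setting through $W$ and the curvature lower bound $-K$; on the other, the explicit recipe for reading off a Lyapunov function from the validity of (\ref{eqLSI}). Once those two facts are in hand, the proof of Corollary \ref{corLSILya} reduces to ``apply Theorem \ref{thmLya} for $\Leftarrow$ and Theorem \ref{thmLSILya} for $\Rightarrow$.''
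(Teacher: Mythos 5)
Your proposal is correct and matches the paper's reasoning exactly: the corollary is stated immediately after Theorems \ref{thmLya} and \ref{thmLSILya} with the phrase ``As consequence, it follows,'' and the intended proof is precisely the assembly you describe, applying Theorem \ref{thmLya} for the Lyapunov-to-LSI direction under the curvature bound and Theorem \ref{thmLSILya} for the converse. Your observations that the converse direction does not use the curvature hypothesis and that the constants $(c,b,x_0)$ need not match any \emph{a priori} ones are both accurate and harmless for the equivalence claim.
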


Since the Poincar\'{e} inequality will serve as a basic tool in our proofs, it is necessary to take some related literature into account. Cattiaux-Guillin-Zitt \cite{CGZ} sought out the links around the Poincar\'{e} inequality, exponential contraction of diffusion semigroup, exponential integrability of hitting time, and (weak) Lyapunov condition as
   \[ \mathrm{L}W \leqslant -\lambda W \ \textrm{on}\ \bar{U}^c, \ \ \textrm{with}\ W\geqslant 1, \lambda>0, U \ \textrm{open connected and bounded}, \]
which made a continuous-time analogy with those characterizations for irreducible and aperiodic Markov chains on countable state spaces. Bakry-Cattiaux-Guillin \cite{BCG} discussed also the relations between the Poincar\'{e} inequality and certain Lyapunov condition associated with a closed Petite set. Note that our argument for Corollary \ref{corLSILya} can not be applied to the above quoted results.

Next two sections will be devoted to the proofs of Theorem \ref{thmLya}-\ref{thmLSILya} respectively.

\bigskip
\section{Proof of Theorem \ref{thmLya}}
\label{Lyapunov}
 \setcounter{equation}{0}

First of all, we show that the Lyapunov condition (\ref{eqLya}) acts the same as in \cite{CGW}.

\begin{lemma}(\cite[Theorem 1.4]{BBCG}) \label{lemLya}
The Poincar\'{e} inequality holds under the Lyapunov condition (\ref{eqLya}). Moreover, for any $h\in \mathcal{D}(\mathcal{E})$
   \beqn
      \int h^2(x)d^2(x,x_0)\mathrm{d}\mu(x) \leqslant \frac{1}{c} \int |\nabla h|^2 \mathrm{d}\mu + \frac{b}{c}\int h^2 \mathrm{d}\mu. \label{eqTransLya}
   \eeqn
\end{lemma}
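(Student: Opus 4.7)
The plan is to exploit the Lyapunov condition by testing the Dirichlet form $\mathcal{E}(W, h^2/W)$ against the function $W$, which is the classical BBCG device: it turns the second-order operator $\mathrm{L}W$ into a first-order quantity, at the cost of a term that Cauchy--Schwarz absorbs into $|\nabla h|^2$. This will directly give the weighted estimate (\ref{eqTransLya}), from which the Poincar\'{e} inequality follows by a standard localization once we have a local Poincar\'{e} inequality on geodesic balls.

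Concretely, for (\ref{eqTransLya}) I would rewrite (\ref{eqLya}) as $-\mathrm{L}W/W \geqslant cd^2(\cdot,x_0)-b$, multiply by $h^2$, and integrate against $\mu$ to get
\[
 c\int h^2 d^2(x,x_0)\,\mathrm{d}\mu - b\int h^2\,\mathrm{d}\mu \;\leqslant\; \int \frac{h^2}{W}(-\mathrm{L}W)\,\mathrm{d}\mu \;=\; \int \Gamma\!\left(W,\frac{h^2}{W}\right)\mathrm{d}\mu
\]
via the integration by parts formula. Expanding
\[
 \Gamma\!\left(W,\tfrac{h^2}{W}\right) = \frac{2h}{W}\Gamma(W,h) - \frac{h^2}{W^2}\Gamma(W,W),
\]
the inequality $2ab\leqslant a^2+b^2$ with $a=h\Gamma(W,W)^{1/2}/W$ and $b=\Gamma(h,h)^{1/2}$ yields $\Gamma(W,h^2/W)\leqslant |\nabla h|^2$, and rearranging gives (\ref{eqTransLya}). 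For the Poincar\'{e} inequality, take $h\in\mathcal{D}(\mathcal{E})$ centered (i.e.\ $\mu h=0$) and split $\mu(h^2)=\int_{B}h^2\mathrm{d}\mu+\int_{B^c}h^2\mathrm{d}\mu$ over a geodesic ball $B=B(x_0,R)$. The outer piece is bounded by $R^{-2}\int h^2 d^2\,\mathrm{d}\mu$, which (\ref{eqTransLya}) controls by $\mathcal{E}[h]/(cR^2)+b\,\mu(h^2)/(cR^2)$. The inner piece is handled by a local Poincar\'{e} inequality on $B$ (available because $e^{-V}$ is bounded above and below on the compact set $\bar{B}$), giving a bound $C_R\mathcal{E}[h]+C_R(\mu_B h)^2$, and the mean on $B$ is then absorbed using $\mu h=0$ together with Cauchy--Schwarz applied to $\int_{B^c}h\,\mathrm{d}\mu$. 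Choosing $R$ sufficiently large to make $b/(cR^2)<1$, we absorb the $\mu(h^2)$ term and obtain the Poincar\'{e} inequality with some explicit constant.

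The main obstacle is the rigorous justification of the integration by parts step when $W$ is only known to satisfy (\ref{eqLya}) in the distributional sense, with $W^{-1}$ merely locally bounded rather than bounded; one needs to truncate, e.g.\ replace $W$ by $W\wedge N$ (or a smooth analog) and replace $h$ by a compactly supported approximation, verify the estimate at the truncated level, and then pass to the limit using monotone/dominated convergence to recover (\ref{eqTransLya}) for general $h\in\mathcal{D}(\mathcal{E})$. The carr\'{e} du champ manipulations and the localization argument for the Poincar\'{e} inequality are essentially routine, so the technical heart of the proof is ensuring that this approximation scheme is compatible with the distributional hypothesis on $W$, as carried out in \cite[Theorem 1.4]{BBCG}.
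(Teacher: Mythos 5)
Your proof is correct and follows essentially the same path as the paper: the Cauchy--Schwarz/completing-the-square step showing $\Gamma(W,h^2/W)\leqslant|\nabla h|^2$ is exactly the paper's identity $\nabla W\cdot\nabla(h^2/W)=|\nabla h|^2-|\nabla h-\tfrac{h}{W}\nabla W|^2$, and your localization for the Poincar\'{e} inequality is precisely the BBCG argument the paper cites. The paper handles the regularity issue you flag by approximating $h$ in $C_{\mathrm{c}}^\infty(E)$ rather than by truncating $W$, but that is a minor variation on the same approximation idea.
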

\begin{proof}
It follows from (\ref{eqLya}) that
  \[ \mathrm{L} W \leqslant \left[ -c+(c+b)\mathbf{1}_{B_1(0)} \right] W, \]
which implies the Poincar\'{e} inequality by the argument of \cite[Page 64]{BBCG}. Moreover,
  \beq
    \int h^2(x)d^2(x,x_0)\mathrm{d}\mu(x)
    &=& \frac{1}{c} \int h^2(cd^2(x,x_0)-b) \mathrm{d}\mu + \frac{b}{c}\int h^2 \mathrm{d}\mu \\
    &\leqslant& \frac{1}{c} \int \frac{-\mathrm{L}W}{W} h^2 \mathrm{d}\mu + \frac{b}{c}\int h^2 \mathrm{d}\mu \\
    &=& \frac{1}{c} \int \nabla W \cdot \nabla \frac{h^2}{W}  \mathrm{d}\mu + \frac{b}{c}\int h^2 \mathrm{d}\mu \\
    &=& \frac{1}{c} \int |\nabla h|^2 - \left| \nabla h - \frac{h}{W}\nabla W \right|^2  \mathrm{d}\mu + \frac{b}{c}\int h^2 \mathrm{d}\mu \\
    &\leqslant& \frac{1}{c} \int |\nabla h|^2 \mathrm{d}\mu + \frac{b}{c}\int h^2 \mathrm{d}\mu.
  \eeq
Note that here is no need to assume the integrability of $d^2(x,x_0)$ or $\frac{-\mathrm{L}W}{W}$ for $\mu$, since we can take an approximation sequence in $C_{\textrm{c}}^\infty(E)$ for given $h$.
\end{proof}

\begin{lemma} \label{lemRic}
The curvature condition $\mathrm{Ric}+ \mathrm{Hess}(V)\geqslant -K$ with $K\in \mathbb{R}$ implies
\begin{enumerate}
\item For any $f\in \mathcal{B}_b^+$, $x,y\in E$ and $t>0$
   \beqn
      (P_tf)^2(x)\leqslant P_tf^2(y)\exp\left( \frac{Kd^2(x,y)}{1-e^{-2Kt}}\right). \label{eqPtbound}
   \eeqn
\item For any $f\in C_{\mathrm{b}}^1$ and $t>0$
   \beqn
      |\nabla P_tf| \leqslant e^{Kt} P_t|\nabla f|. \label{eqPtGradient}
   \eeqn
\end{enumerate}
\end{lemma}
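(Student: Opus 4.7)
The plan is to derive (\ref{eqPtGradient}) first by Bakry's semigroup interpolation, and then obtain (\ref{eqPtbound}) from it via Wang's geodesic interpolation. Both are well-known consequences of the Bakry-Emery lower curvature bound $\mathrm{Ric}+\mathrm{Hess}(V)\geqslant -K$ and appear in \cite{BGL}; here I indicate the heat-flow strategy.

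For part (\ref{eqPtGradient}), I would fix $t>0$ and a smooth positive $f$, and introduce $\phi(s):=P_s|\nabla P_{t-s}f|$ on $[0,t]$. Using $\partial_s P_{t-s}f=-\mathrm{L}P_{t-s}f$ together with $\partial_s P_s h=P_s\mathrm{L}h$, differentiation gives
\[
\phi'(s) = P_s\!\left(\mathrm{L}|\nabla P_{t-s}f| - \frac{\nabla P_{t-s}f\cdot \nabla\mathrm{L}P_{t-s}f}{|\nabla P_{t-s}f|}\right),
\]
so matters reduce to the pointwise bound $\mathrm{L}|\nabla g|-\nabla g\cdot\nabla\mathrm{L}g/|\nabla g|\geqslant -K|\nabla g|$ for smooth $g$. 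This follows from the Bochner-Weitzenb\"ock identity
\[
\tfrac{1}{2}\mathrm{L}|\nabla g|^2 = |\mathrm{Hess}(g)|^2+\nabla g\cdot\nabla\mathrm{L}g + (\mathrm{Ric}+\mathrm{Hess}(V))(\nabla g,\nabla g)
\]
combined with the Kato-type inequality $|\mathrm{Hess}(g)\nabla g|^2/|\nabla g|^2\leqslant|\mathrm{Hess}(g)|^2$ and the curvature hypothesis, after the standard regularization of $|\nabla g|$ by $\sqrt{|\nabla g|^2+\eta}$ with $\eta\downarrow 0$ to handle zeros. Hence $\phi'\geqslant -K\phi$, and Gronwall yields $\phi(0)\leqslant e^{Kt}\phi(t)$, which is (\ref{eqPtGradient}).

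For part (\ref{eqPtbound}), I would derive Wang's dimension-free Harnack from (\ref{eqPtGradient}) by interpolating along a unit-speed minimal geodesic $\gamma$ from $y$ to $x$. For $f>0$ bounded and a smooth exponent path $p(\cdot)$ decreasing from $p(0)=2$ to $p(d(x,y))=1$, I would consider
\[
\Psi(s):=\frac{1}{p(s)}\log P_t\bigl(f^{p(s)}\bigr)(\gamma(s)).
\]
Differentiating produces a spatial term controlled by $|\nabla P_tf^{p(s)}|/P_tf^{p(s)}$, bounded via (\ref{eqPtGradient}) and Cauchy-Schwarz, plus an entropy-type term from $p'(s)$; a completion-of-squares absorbs the spatial term into the entropy one up to a remainder proportional to $d^2(x,y)$, and optimizing over $p(\cdot)$ yields the sharp constant $K/(1-e^{-2Kt})$. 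The main obstacle is precisely this optimization/bookkeeping, which although elementary is somewhat delicate; once it is in place, part (\ref{eqPtGradient}) is a routine Bochner-identity computation.
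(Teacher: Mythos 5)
The paper does not actually prove this lemma: the stated proof is a one-line citation of \cite[Sections 5.5--5.6]{BGL} and \cite[Theorem 2.3.3]{Wang-book}. Your sketch reconstructs the standard arguments from those references, and the overall strategy is the right one: establish \eqref{eqPtGradient} by Bakry's interpolation $s\mapsto P_s|\nabla P_{t-s}f|$ and the Bochner--Weitzenb\"ock identity, then obtain Wang's dimension-free Harnack \eqref{eqPtbound} (with exponent $\alpha=2$) via geodesic interpolation. Your computation for part (2) is correct, including the Kato-type inequality $|\mathrm{Hess}(g)\nabla g|\leqslant|\mathrm{Hess}(g)|_{\mathrm{HS}}\,|\nabla g|$ and the regularization of $|\nabla g|$.

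One presentational gap in part (1): the spatial term $|\nabla P_t f^{p(s)}|/P_t f^{p(s)}$ is \emph{not} absorbed into the entropy term merely via \eqref{eqPtGradient} and Cauchy--Schwarz. What is actually needed is the reverse local log-Sobolev inequality
\[
P_t(g\log g)-P_tg\log P_tg\ \geqslant\ \frac{1-e^{-2Kt}}{2K}\,\frac{|\nabla P_tg|^2}{P_tg},
\]
so that $|\nabla P_tg|/P_tg$ is controlled by the square root of the entropy appearing alongside $p'(s)$, after which your completion of squares and the Cauchy--Schwarz optimization over $p(\cdot)$ (giving $\int_0^D |p'|^{-1}\,ds \geqslant D^2$) yield the stated constant $K/(1-e^{-2Kt})$. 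This reverse log-Sobolev inequality does in fact follow from \eqref{eqPtGradient} by a further semigroup interpolation $s\mapsto P_s(P_{t-s}g\log P_{t-s}g)$ together with Cauchy--Schwarz, so your route is recoverable; it just involves one more interpolation than your outline acknowledges. Subject to that refinement, the proposal is a correct reconstruction of the cited standard proofs.
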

\begin{proof}
Refer to \cite[Section 5.5-5.6]{BGL} or Wang \cite[Theorem 2.3.3]{Wang-book}.
\end{proof}

\begin{corollary} \label{corPtbound}
If $\mathrm{Ric}+ \mathrm{Hess}(V)\geqslant -K$ with $K\in \mathbb{R}^+$, set $\mu_0 = \mu e^{-2Kd^2(x_0,\cdot)}$, then
   \beqn
      \frac{(P_tf)^2(x)}{\mu f^2}
      \leqslant   \mu_0^{-\frac{1}{1-e^{-2Kt}}}\exp\left(\frac{2K}{1-e^{-2Kt}}d^2(x_0,x)\right). \label{eqPtx0Upper}
   \eeqn
\end{corollary}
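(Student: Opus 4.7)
The plan is to apply Wang's Harnack inequality (\ref{eqPtbound}) and average the $y$-variable against $\mu$. Writing $\alpha := 1/(1-e^{-2Kt})$ and rearranging (\ref{eqPtbound}), we have $(P_tf)^2(x)\exp(-K\alpha d^2(x,y)) \leqslant P_tf^2(y)$ for every $y \in E$. Integrating this bound against $\mathrm{d}\mu(y)$, the right hand side collapses to $\mu f^2$ by the $\mu$-invariance of $P_t$, so
\[
(P_tf)^2(x) \leqslant \frac{\mu f^2}{\int_E \exp(-K\alpha\, d^2(x,y))\,\mathrm{d}\mu(y)}.
\]

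The next step is to lower bound the denominator by a product that separates the $x$-dependence from a $\mu_0$-like integral. The elementary estimate $d^2(x,y) \leqslant 2 d^2(x_0,x) + 2 d^2(x_0,y)$ yields
\[
\exp\bigl(-K\alpha\, d^2(x,y)\bigr) \geqslant \exp\bigl(-2K\alpha\, d^2(x_0,x)\bigr)\exp\bigl(-2K\alpha\, d^2(x_0,y)\bigr).
\]
Since $\alpha \geqslant 1$ and $u\mapsto u^\alpha$ is convex on $[0,\infty)$, Jensen's inequality applied to $g(y) := \exp(-2Kd^2(x_0,y))$ gives
\[
\int_E \exp\bigl(-2K\alpha\, d^2(x_0,y)\bigr)\,\mathrm{d}\mu(y) = \int g^\alpha\,\mathrm{d}\mu \geqslant \Bigl(\int g\,\mathrm{d}\mu\Bigr)^\alpha = \mu_0^\alpha.
\]
Plugging these two bounds into the previous display and dividing by $\mu f^2$ yields (\ref{eqPtx0Upper}) exactly.

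There is no serious obstacle: the argument is a pure computation combining Wang's Harnack inequality with a triangle--Jensen step. The mildly delicate point is the separation of $x$ and $y$ inside the Gaussian weight: $d^2(x,y)$ must be bounded \emph{above} (not below) using $x_0$, which forces the factor $2$ in the triangle inequality and hence the coefficient $2K$ (rather than $K$) both in the definition of $\mu_0$ and in the right hand side of (\ref{eqPtx0Upper}). A secondary concern is whether the estimate extends from $f\in \mathcal{B}_b^+$ to general $f$ needed later; this is immediate upon replacing $f$ by $|f|$, since $P_tf^2=P_t|f|^2$ and $(P_tf)^2\leqslant (P_t|f|)^2$.
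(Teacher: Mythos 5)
Your argument is correct and rests on the same two ingredients as the paper's proof: the Harnack bound (\ref{eqPtbound}) and the triangle-inequality splitting $d^2(x,y)\leqslant 2d^2(x_0,x)+2d^2(x_0,y)$. The only (minor) difference is bookkeeping: the paper first raises (\ref{eqPtbound}) to the power $\delta(t)=1-e^{-2Kt}$, integrates in $y$, and applies Jensen/H\"older to $(P_tf^2)^{\delta}$ (concavity of $u\mapsto u^\delta$), whereas you integrate first and apply Jensen to the Gaussian weight $e^{-2K\alpha d^2(x_0,\cdot)}$ (convexity of $u\mapsto u^\alpha$, $\alpha=1/\delta$); these are dual rearrangements of the identical estimate and give the same constants.
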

\begin{proof}
Denote $\delta(t) = 1-e^{-2Kt}\leqslant 1$. Using Lemma \ref{lemRic} yields
  \beq
      \left( P_tf^2(y)\right)^{\delta(t)}
      &\geqslant& (P_tf)^{2\delta(t)}(x)\exp\left( -Kd^2(x,y)\right)\\
      &\geqslant& (P_tf)^{2\delta(t)}(x)\exp\left( -2Kd^2(x_0,x)-2Kd^2(x_0,y)\right).
   \eeq
Integrating in $y$ on both sides gives
   \[ \mu\left( \left(P_tf^2\right)^{\delta(t)} \right) \geqslant (P_tf)^{2\delta(t)}(x)\exp\left( -2Kd^2(x_0,x)\right)\mu_0,\]
which implies (\ref{eqPtx0Upper}) by the H\"{o}lder inequality $\mu\left( \left(P_tf^2\right)^{\delta(t)} \right) \leqslant \left(\mu(P_tf^2)\right)^{\delta(t)}$.
\end{proof}

Now we prove Theorem \ref{thmLya}.
\begin{proof}
The strategy contains three steps. Assume $f$ is a bounded smooth function.

{\bf Step 1}. Abbreviate $\varphi = P_t f$, we introduce
  \[ \mathrm{Ent}^*(\varphi^2) := \int \varphi^2 \log\frac{\varphi^2}{\mu f^2} \mathrm{d}\mu, \]
and
  \[ \Psi(t) :=  \mathcal{E}[\varphi] + A\mu (\varphi - \mu \varphi)^2
         - \eta\mathrm{Ent}^*(\varphi^2), \]
where $A$ and $\eta$ are two positive constants which will be decided below.

Derivative calculations give respectively
  \beqn
      \frac{\mathrm{d}}{\mathrm{d}t}\mathcal{E}[\varphi] &=&
         - 2\mu |\nabla \nabla\varphi|^2 - 2\mu \left[\left(\mathrm{Ric}+\mathrm{Hess}(V)\right)(\nabla\varphi, \nabla\varphi)\right], \label{eqDe11}\\
      A\frac{\mathrm{d}}{\mathrm{d}t} \mu (\varphi - \mu \varphi)^2 &=&
         -2A\mu (|\nabla \varphi|^2),\label{eqDe12}\\
      -\eta\frac{\mathrm{d}}{\mathrm{d}t} \mathrm{Ent}^*(\varphi^2) &=&
         2\eta\mu \left(|\nabla \varphi|^2 \log\frac{\varphi^2}{\mu f^2}\right) + 6\eta\mu |\nabla\varphi|^2. \label{eqDe13}
   \eeqn
Here $\nabla\nabla \varphi$ denotes the Hessian of $\varphi$. Due to the curvature condition, (\ref{eqDe11}) is less than $- 2\mu |\nabla \nabla\varphi|^2 + 2K\mu|\nabla\varphi|^2$. To estimate (\ref{eqDe13}), using (\ref{eqPtx0Upper}) gives
   \[ \mu \left(|\nabla \varphi|^2 \log\frac{\varphi^2}{\mu f^2}\right) \leqslant
          \frac{2K}{1-e^{-2Kt}} \mu \left(|\nabla \varphi|^2 d^2(x,x_0)\right) -\frac{\log \mu_0}{1-e^{-2Kt}} \mu |\nabla \varphi|^2.\]

We fix $t_0=1$ (or any positive number) and set $\eta=c \frac{1-e^{-2Kt_0}}{2K}$. Applying (\ref{eqTransLya}) to the above inequality for $h^2=|\nabla \varphi|^2$ yields for all $t\geqslant t_0$
   \beqn
      \hspace*{1cm} 2\eta \mu \left(|\nabla \varphi|^2 \log\frac{\varphi^2}{\mu f^2}\right) \leqslant
        2\mu |\nabla \nabla\varphi|^2 + 2b\mu|\nabla \varphi|^2 - \frac{c\log \mu_0}{K}\mu |\nabla \varphi|^2. \label{eqApplyLya}
   \eeqn

Set $A=K+b-\frac{c\log \mu_0}{2K}+3\eta$ (note that $\mu_0\leqslant 1$), we obtain by combining (\ref{eqDe11}-\ref{eqDe13}) with (\ref{eqApplyLya})
  \[ \frac{\mathrm{d}}{\mathrm{d}t}\Psi(t) \leqslant 0, \ \ \forall t\geqslant t_0, \]
which implies
  \[ \Psi(t_0) \geqslant \Psi(t) \geqslant \Psi(\infty)\geqslant 0, \ \ \forall t\geqslant t_0, \]
namely
  \[ \eta\mathrm{Ent}^*(\varphi^2) \leqslant \mathcal{E}[\varphi] + A\mu (\varphi - \mu \varphi)^2, \ \ \forall t\geqslant t_0. \]

{\bf Step 2}. For $0<t<t_0$, it is invalid to prove $\frac{\mathrm{d}}{\mathrm{d}t}\Psi(t) \leqslant 0$ in the above manner. Nevertheless, we turn to comparing $\mathrm{Ent}(P_{t_0}f^2)$ with $\mathrm{Ent}^*\left((P_{t_0}f)^2\right)$ directly (and thus, it is allowed to alter the definition of $\Psi$ for $0<t<t_0$). Define
  \[ \Theta_1(t) = \int \left( P_{t}f^2 - (P_{t}f)^2 \right) \log \frac{(P_{t}f)^2}{\mu f^2}\mathrm{d}\mu, \ \ \
     \Theta_2(t) = \int P_tf^2 \log\frac{P_tf^2}{(P_t f)^2} \mathrm{d}\mu, \]
which satisfy
  \[ \mathrm{Ent}(P_{t_0}f^2) - \mathrm{Ent}^*\left((P_{t_0}f)^2\right) = \Theta_1(t_0) + \Theta_2(t_0). \]

Firstly, it follows from (\ref{eqPtx0Upper})
   \beq
      &&\Theta_1(t_0)\\
      &\leqslant& \frac{2K}{1-e^{-2Kt_0}} \mu {\Big[} \left( P_{t_0}f^2- (P_{t_0}f)^2 \right)d^2(x,x_0) {\Big]} -\frac{\log \mu_0}{1-e^{-2Kt_0}} \mu \left(P_{t_0}f^2 - (P_{t_0}f)^2 \right)\\
      &\leqslant& \frac{2K}{1-e^{-2Kt_0}} \mu \left(P_{t_0}f^2 \cdot d^2(x,x_0)\right) -\frac{\log \mu_0}{1-e^{-2Kt_0}}\mu\left(f^2-(\mu f)^2\right).
   \eeq
Using (\ref{eqTransLya}), (\ref{eqPtGradient}) and the H\"{o}lder inequality yields
   \beq
      \mu \left(P_{t_0}f^2 \cdot d^2(x,x_0)\right)
      &\leqslant& c^{-1}\mu \left|\nabla \sqrt{P_{t_0}f^2}\right|^2 + bc^{-1} \mu f^2\\
      &=& (4c)^{-1}\mu \frac{\left|\nabla P_{t_0}f^2\right|^2}{P_{t_0}f^2} + bc^{-1} \mu f^2\\
      &\leqslant& (4c)^{-1}e^{2Kt_0} \mu \frac{\left(P_{t_0}\left|\nabla f^2\right|\right)^2}{P_{t_0}f^2} + bc^{-1} \mu f^2\\
      &=& c^{-1}e^{2Kt_0} \mu \frac{\left(P_{t_0}\left|f\nabla f\right|\right)^2}{P_{t_0}f^2} + bc^{-1} \mu f^2\\
      &\leqslant& c^{-1}e^{2Kt_0} \mu |\nabla f|^2 + bc^{-1} \mu f^2.
   \eeq
Combining the above two estimates gives
   \beqn
      \Theta_1(t_0)
      \leqslant C_1 \mathcal{E}[f] + C_2\mu f^2 + C_3 \mu (f-\mu f)^2, \label{eqCompare01}
   \eeqn
where $C_1 = \frac{2Ke^{2Kt_0}}{c(1-e^{-2Kt_0})}$, $C_2 = \frac{2bK}{c(1-e^{-2Kt_0})}$ and $C_3 = -\frac{\log \mu_0}{1-e^{-2Kt_0}}$.

Secondly, due to $\Theta_2(0) = 0$, there is an integral representation
   \beq
     \Theta_2(t_0) &=& \int_0^{t_0} \Theta_2'(t) \mathrm{d}t \\
     &=& \int_0^{t_0} \int \mathrm{L}P_tf^2 \log\frac{P_tf^2}{(P_t f)^2} + \mathrm{L}P_tf^2 - 2\frac{P_tf^2}{P_t f} \mathrm{L}P_tf \;\mathrm{d}\mu\mathrm{d}t \\
     &=& \int_0^{t_0} \int \mathrm{L}P_tf^2 \log\frac{P_tf^2}{(P_t f)^2} - 2\frac{P_tf^2}{P_t f} \mathrm{L}P_tf \;\mathrm{d}\mu\mathrm{d}t\\
     &=& \int_0^{t_0} \int -\frac{\left|\nabla P_tf^2\right|^2}{P_t f^2} + 4\frac{\nabla P_tf^2 \cdot \nabla P_t f}{P_t f} - 2\frac{P_tf^2\left|\nabla P_tf\right|^2}{(P_t f)^2} \;\mathrm{d}\mu\mathrm{d}t\\
     &\leqslant& \int_0^{t_0} \int \frac{\left|\nabla P_tf^2\right|^2}{P_t f^2} - 2\left( \frac{\left|\nabla P_tf^2\right|}{\sqrt{P_t f^2}} - \frac{\sqrt{P_t f^2}\left|\nabla P_tf\right|}{P_tf} \right) ^2 \;\mathrm{d}\mu\mathrm{d}t,
   \eeq
which implies through (\ref{eqPtGradient}) and the H\"{o}lder inequality
   \beqn
      \Theta_2(t_0) &\leqslant& \int_0^{t_0} \mu\frac{\left|\nabla P_tf^2\right|^2}{P_t f^2}\; \mathrm{d}t
          \ \leqslant\ \int_0^{t_0} 4e^{2Kt} \mu \frac{\left(P_t|f \nabla f| \right)^2}{P_t f^2}\; \mathrm{d}t \nonumber\\
          &\leqslant& \int_0^{t_0} 4e^{2Kt} \mu|\nabla f|^2\; \mathrm{d}t \ =:\ C_4 \mathcal{E}[f], \label{eqLambdaBound}
   \eeqn
where $C_4 = \frac{2(e^{2Kt_0} -1)}{K}$.

Combining (\ref{eqCompare01}) with (\ref{eqLambdaBound}) gives
   \[ \mathrm{Ent}(P_{t_0}f^2) - \mathrm{Ent}^*\left((P_{t_0}f)^2\right)
        \leqslant (C_1+C_4)\mathcal{E}[f] + C_2\mu f^2 + C_3 \mu (f-\mu f)^2. \]
Recall the last inequality in Step 1 together with the monotonicity of $\mathcal{E}[\varphi]$ in $t$ (see \cite[Proposition 3.1.6]{BGL}), we obtain
   \[ \eta \mathrm{Ent}(P_{t_0}f^2) \leqslant \left[1+\eta(C_1+C_4)\right]\mathcal{E}[f] + \eta C_2\mu f^2 + (A+\eta  C_3) \mu (f-\mu f)^2. \]

{\bf Step 3}. Now it is feasible to employ a new form of $\Psi$ for $0<t<t_0$ like
  \[ (1+t_0 - t)\left(A_1\mathcal{E}[f] + A_2\mu f^2 + A_3\mu (f - \mu f)^2\right) - \eta\mathrm{Ent}(P_t  f^2 )\]
to show its monotonicity further. In an equivalent and quick way, it is enough to use the same argument as (\ref{eqLambdaBound}) to get
   \beq
       \mathrm{Ent}(f^2) - \mathrm{Ent}(P_{t_0}f^2)
       &=& \int_0^{t_0} -\frac{\mathrm{d}}{\mathrm{d}t} \mathrm{Ent}(P_tf^2) \mathrm{d}t \\
       &=& \int_0^{t_0}\int -\mathrm{L}P_tf^2 \log\frac{P_t f^2}{\mu f^2} -\mathrm{L}P_tf^2 \;\mathrm{d}\mu \mathrm{d}t\\
       &=& \int_0^{t_0} \mu \frac{\left|\nabla P_tf^2\right|^2}{P_tf^2} \mathrm{d}t \ \leqslant\ C_4 \mathcal{E}[f].
   \eeq
Recall the last inequality in Step 2, it follows
   \[ \eta \mathrm{Ent}(f^2) \leqslant \left[1+\eta(C_1+2C_4)\right]\mathcal{E}[f] + \eta C_2\mu f^2 + (A+\eta C_3)\mu (f-\mu f)^2. \]

Denote by $\lambda_\mu$ the spectral gap, applying the Poincar\'{e} inequality to the above estimate yields
   \beqn
     \ \ \ \ \ \ \eta \mathrm{Ent}(f^2) \leqslant \left[1+\eta(C_1+2C_4) + \lambda_{\mu}^{-1}(A+\eta C_3)\right]\mathcal{E}[f] + \eta C_2\mu f^2. \label{eqPreLast}
   \eeqn
Substituting $f$ in (\ref{eqPreLast}) to $f-\mu f$, we can bound $\eta\mathrm{Ent}((f-\mu f)^2)$ by some linear combination of $\mathcal{E}[f]$ and $\mu (f-\mu f)^2$.

On the other hand, applying the Rothaus's lemma in \cite{Rothaus} i.e. $\mathrm{Ent}((f+a)^2) \leqslant \mathrm{Ent}(f^2) + 2\mu f^2$ for any $a\in \mathbb{R}$ yields
   \beqn
     \mathrm{Ent}(f^2) \leqslant \mathrm{Ent}((f-\mu f)^2) + 2\mu (f-\mu f)^2, \label{eqPreLast2}
   \eeqn
which implies an inequality as $\eta \mathrm{Ent}(f^2) \leqslant B_1\mathcal{E}[f] + B_2\mu (f-\mu f)^2$. Combining it with the Poincar\'{e} inequality again gives
  \[ \eta \mathrm{Ent}(f^2) \leqslant C\mathcal{E}[f] \]
with $C=1+\eta(C_1+2C_4) +  \lambda_\mu^{-1}[A+\eta(2+C_2+C_3)]$.
\end{proof}

\bigskip

\section{Proof of Theorem \ref{thmLSILya}}
\label{LSIToLya}
 \setcounter{equation}{0}

The construction of Lyapunov function comes from solving an elliptic equation. For convenience, suppose a LSI holds as
  \[ 2\rho \mathrm{Ent}(f^2) \leqslant \mathcal{E}[f]. \]
According to the Herbst's argument in Aida-Masuda-Shigekawa \cite{AMSh}, a LSI implies the Gaussian integrability with some $c>0$ and $x_0\in E$
  \[ \mu e^{cd^2(x,x_0)} < \infty, \]
see also \cite[Proposition 5.4.1]{BGL} or \cite[Section 5.1]{Ledoux-01}.

Denote $\phi(x) = \rho\left(-cd^2(x,x_0) + b\right)$ with $b=2\mu e^{cd^2(x,x_0)}$. We introduce the equation for $f\in L^2(\mu)$
  \beqn
     \mathrm{H}u := -\mathrm{L}u + \phi u = f. \ \ \label{eqSchr}
  \eeqn
Our aim is to find a continuous positive solution $u$ to (\ref{eqSchr}) for $f\equiv 1$, which fulfills the definition of Lyapunov function and thus prove Theorem \ref{thmLSILya}.

\begin{proof} The strategy contains two steps.

{\bf Step 1}. Equation (\ref{eqSchr}) gives $\mu(u\cdot \mathrm{H}u) \leqslant \mathcal{E}[u] + \rho b \mu u^2$ quickly. On the other hand, using the Young's inequality and LSI yields
  \beq
     \mu(u\cdot \mathrm{H}u)
     &=& \mathcal{E}[u] + \rho b \mu u^2 - \rho \mu\left( cd^2(x,x_0)u^2 \right)\\
     &\geqslant& \mathcal{E}[u] + \rho b \mu u^2 - \rho\cdot\mu u^2 \cdot \mu\left( e^{cd^2(x,x_0)} - \frac{u^2}{\mu u^2} + \frac{u^2}{\mu u^2}\log\frac{u^2}{\mu u^2} \right)\\
     &=& \mathcal{E}[u] + \rho b \mu u^2 - \frac{b}{2}\rho\mu u^2 + \rho\mu u^2 - \rho\mathrm{Ent}(u^2) \ \geqslant\ \frac{1}{2}(\mathcal{E}[u] + \rho b \mu u^2).
  \eeq
Then $\mu(u\cdot \mathrm{H}u)$ determines a coercive Dirichlet form, and $\mathrm{H}$ is a positive definite self-adjoint Schr\"{o}dinger operator with its spectrum contained in $(0,\infty)$. It means $\mathrm{H}^{-1}$ exists on $L^2(\mu)$ according to the Lax-Milgram Theorem, i.e. $u=\mathrm{H}^{-1}f\in H^1(\mu)$ (i.e. the $L^2$-integrable Sobolev space of weak derivatives of first order) is a weak solution to Equation (\ref{eqSchr}).

Whenever $f\geqslant 0$, the weak maximum principle yields $u=\mathrm{H}^{-1}f \geqslant 0$ $\mu$-a.e. too. As a routine, we set $u_{-} = -\min\{u, 0\}$, which has weak derivatives and satisfies
   \[ \mu(u_{-}f) = \mu(u_{-}\cdot \mathrm{H}u) = - \mathcal{E}[u_{-}] - \mu(\phi u_{-}^2)
       \leqslant -\frac12 \left\{ \mathcal{E}[u_{-}] + \rho b\mu(u_{-}^2) \right\} \leqslant 0. \]
It follows $u_{-} = 0$ $\mu$-a.e. and thus $u\geqslant 0$ $\mu$-a.e.

{\bf Step 2}. Now fix $f\equiv 1$. By \cite[Theorem 8.22]{GiTr} and the notation therein, $u$ is locally H\"{o}lder continuous if we set $f^i=0$, $g=-f$ and $L = \mathrm{L} - \phi$ such that $Lu = g$.

Moreover, we prove that $u>0$ everywhere. By contradiction, assume $u(y)=0$ at some $y$. Choose $r>0$ and $w\in C^2(E)$ arbitrarily to satisfy
  \[ w\geqslant0, \ w(y)>0, \ w_{|B_r(y)^c} = 0. \]
Since $\mathrm{H}w$ is a bounded function, we can take a factor $\lambda>0$ and $v = \lambda w$ such that
  \[ \mathrm{H} v =  \lambda \mathrm{H} w \leqslant \frac12. \]
It follows on $E$
  \[ \mathrm{H}(u-v) = f - \mathrm{H}v \geqslant \frac12, \]
which implies $u-v\geqslant0$ $\mu$-a.e. by the weak maximum principle. Then $u(y)\geqslant v(y)$ by the continuity, which is absurd.

As consequence, we obtain $u\in H^1(\mu)\cap C(E)$ is strictly positive everywhere, and $u^{-1}$ is locally bounded. So $u$ is a Lyapunov function for $\mathrm{L}u \leqslant \phi u$.
\end{proof}

\bigskip

\subsection*{Acknowledgements}

{\small It is my great pleasure to thank Prof. Li-Ming Wu for some helpful conversations, and also thank the anonymous referee for his/her careful reading of the first version and pointing out some mistakes. This work is supported by NSFC (no. 11201456, and no. 1143000182), CAS (no. Y129161ZZ1), and Key Laboratory of Random Complex Structures and Data, Academy of Mathematics and Systems Science, Chinese Academy of Sciences (No. 2008DP173182).}

\bigskip

%\subsection*{Acknowledgements}

%{\small The authors sincerely thank}

% The above example implies that $P^n$ is asymptotic stable on $X$, but the unique ergodic support is $\{0\}$, which has no interior. So Condition $(\mathbf{A}1)$ is not necessary, however, the next example shows that it is sharp.

% BibTeX users please use
% \bibliographystyle{}
% \bibliography{}
%
% Non-BibTeX users please use

\end{document}